\documentclass[a4paper,12pt]{article}

\usepackage{amsmath,amsfonts,amssymb}
\usepackage{amsthm}
\usepackage{color,multicol}

\newtheorem{lemma}{Lemma}[section]
\newtheorem{theorem}[lemma]{Theorem}

\newtheorem{proposition}[lemma]{Proposition}

\newtheorem{remark}[lemma]{Remark}

\parindent=0pt

\begin{document}

\title{On the limit behaviour of second order relative spectra of  self-adjoint operators}

\author{Eugene Shargorodsky\footnote{E-mail: \ eugene.shargorodsky@kcl.ac.uk} \\
Department of Mathematics, King's College London,\\
Strand, London WC2R 2LS, UK}

\date{}

\maketitle

\begin{abstract}
It is well known that the standard projection methods allow one to recover the
whole spectrum of a bounded self-adjoint operator but they often lead to spectral pollution,
i.e. to spurious eigenvalues lying in the gaps of the essential spectrum. 
Methods using second order relative spectra are free from this problem, 
but they have not been proven to approximate the whole spectrum. L. Boulton (2006, 2007)
has shown that second order relative spectra approximate all isolated eigenvalues
of finite multiplicity. The main result of the present paper is that second order relative spectra 
do not in general approximate the whole of the essential spectrum of a 
bounded self-adjoint operator.
\end{abstract}

\section{Introduction}

Let $\mathcal{H}$ be a Hilbert space and $\mathcal{B}(\mathcal{H})$ be the space of bounded
linear operators on $\mathcal{H}$. Let
$\mathcal{L}_1 \subset \mathcal{L}_2 \subset \cdots \subset
\mathcal{L}_l \subset \mathcal{L}_{l + 1} \subset \cdots$
be an increasing sequence of finite dimensional linear subspaces of
$\mathcal{H}$ such that the corresponding
orthogonal projections $P_l : \mathcal{H} \to \mathcal{L}_l$
converge strongly to the identity operator $I$. Let
$\mathfrak{P}(\mathcal{H})$ be the set of all such sequences of subspaces.

Suppose $T = T^* \in \mathcal{B}(\mathcal{H})$ and denote the spectrum of
$P_l T : \mathcal{L}_l \to \mathcal{L}_l$ by
$\mbox{Spec}(T, \mathcal{L}_l)$. Then
\begin{equation}\label{limincl}
\lim_{l \to \infty} \mbox{Spec}(T, \mathcal{L}_l) \supseteq
\mbox{Spec}(T) , 
\end{equation}
where  ``$\lim$'' is defined in an appropriate way (see, e.g., \cite{A} or \cite{S1}).
Unfortunately the left-hand side of \eqref{limincl} may be strictly larger than the right-hand
side. This is  called spectral pollution
(see, e.g., \cite{B1, B2, DP, LS, P, RSSV, S1}) which is a well known phenomenon
in numerical analysis: spurious ``eigenvalues" may appear in the gaps of the
essential spectrum of $T$ and as a result $\lim_{l \to \infty}\mbox{\rm Spec}(T, \mathcal{L}_l)$
may contain points that do not belong to $\mbox{\rm Spec}(T)$. 

A possible way of dealing with spectral pollution 
is based on the notion of second order relative spectra which was introduced by 
E.B. Davies in \cite{D}:   
$$
\mbox{\rm Spec}_2(T, \mathcal{L}_l) := \{\lambda \in \mathbb{C} : \ 
P_l(T - \lambda I)^2 : \mathcal{L}_l \to \mathcal{L}_l
\ \mbox{ is not invertible}\} .
$$
Although the spectrum of a self-adjoint operator $T$ is a subset of $\mathbb{R}$,
the set $\mbox{\rm Spec}_2(T, \mathcal{L}_l)$ may and usually does contain points
from $\mathbb{C}\setminus\mathbb{R}$. Since $T^* = T$, it is easy to see that
$\mbox{\rm Spec}_2(T, \mathcal{L}_l)$ is symmetric with respect to the real line:
$$
\lambda \in \mbox{\rm Spec}_2(T, \mathcal{L}_l) \ \ \Longleftrightarrow \ \ 
\overline{\lambda}  \in \mbox{\rm Spec}_2(T, \mathcal{L}_l) .
$$
If 
$\lambda \in \mbox{\rm Spec}_2(T, \mathcal{L}_l)$ then
\begin{equation}
\label{nonpoll}
\mbox{\rm Spec}(T) \cap [\mbox{\rm Re}\, \lambda - |\mbox{\rm Im}\, \lambda|,
\mbox{\rm Re}\, \lambda + |\mbox{\rm Im}\, \lambda|]
\not= \emptyset 
\end{equation}
(\cite{LS, S1}, see also \cite{Kat0}). This means that  if a point of 
$\mbox{\rm Spec}_2(T, \mathcal{L}_l)$ is close to the real line, then it is close to 
$\mbox{\rm Spec}(T)$, i.e. that, in a sense, second order relative spectra do not pollute.

A natural question, which was first posed in \cite{S1}
(see also \cite{LS, S2}), is whether $\mbox{\rm Spec}_2(T, \mathcal{L}_l)$, 
$(\mathcal{L}_l)_{l \in \mathbb{N}} \in \mathfrak{P}(\mathcal{H})$ capture the whole
spectrum of $T$, i.e. whether or not
$$
\lim_{l \to \infty}  \mbox{\rm Spec}_2(T, \mathcal{L}_l) \supseteq
\mbox{\rm Spec}(T) .
$$
A partial answer to this question was obtained in \cite{B1, B2}: 
$$
\lim_{l \to \infty}  \mbox{\rm Spec}_2(T, \mathcal{L}_l) \supseteq
\{\mbox{isolated eigenvalues of } T
\mbox{ of finite multiplicity}\} .
$$
The main result of the present paper is that $\mbox{\rm Spec}_2(T, \mathcal{L}_l)$ 
do not in general approximate the whole of the essential spectrum 
$\mbox{\rm Spec}_e(T)$ of $T$. In order to state the result, we need the following
notation. Let $d_H(F,G)$ denote the Hausdorff distance between two sets 
$F, G \subset \mathbb{C}$:
$$
d_H(F,G) := \max\left\{\sup_{x \in F} \inf_{y \in G} |x - y| , \,
\sup_{y \in G} \inf_{x \in F} |x - y| \right\} .
$$

Let $\Sigma \subset \mathbb{R}$ be a compact set,
\begin{eqnarray*}
m := \min\Sigma , \ M := \max\Sigma , \ \ \
[m, M] \setminus \Sigma =  \cup_j (m_j, M_j) , \\ 
(m_j, M_j)\cap (m_l, M_l) = \emptyset \ \mbox{ if } \ j \not= l .
\end{eqnarray*}
Define
$$
\mathcal{Q}(\Sigma) := B[m, M]\setminus\cup_j B(m_j, M_j) , 
$$
where $B[c_1, c_2]$ and $B(c_1, c_2)$ denote the closed and the open disk with the diameter
$[c_1, c_2]$.

\begin{theorem}\label{Main}
Let 
$$
-\infty < \rho_-^{(1)}  < \rho_+^{(1)} < \rho_-^{(2)}  <  \rho_+^{(2)}
< \cdots < \rho_-^{(n)}  < \rho_+^{(n)} < +\infty , \ \ \ n \in \mathbb{N},
$$
and let $F \subseteq \mathcal{Q}\left(\bigcup_{j = 1}^n \left[\rho_-^{(j)}, \, \rho_+^{(j)}\right]\right)$ 
be a compact set symmetric with respect to the real line and such that
\begin{equation}
\label{nonempty}
F\cap \left(\rho_-^{(j)}, \, \rho_+^{(j)}\right) \not= \emptyset , \ \ \ j = 1, \dots, n.
\end{equation}
Then there exist $T = T^* \in \mathcal{B}(\mathcal{H})$ and $(\mathcal{L}_l) \in 
\mathfrak{P}(\mathcal{H})$ such that $\mbox{\rm Spec}(T) = 
\bigcup_{j = 1}^n \left[\rho_-^{(j)}, \, \rho_+^{(j)}\right]$ and
$$
d_H\left(\mbox{\rm Spec}_2(T, \mathcal{L}_l), F\right) \to 0 \ \mbox{ as } \ l \to +\infty .
$$
\end{theorem}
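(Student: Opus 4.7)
The proof is by explicit construction. The main ingredients are a geometric characterization of $\mathcal Q(\Sigma)$ via moments, an assembly of $T$ as a direct sum of simple finite-dimensional blocks, and a careful choice of subspaces $\mathcal L_l$ to achieve Hausdorff convergence to $F$.

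\textbf{Moment characterization.} I would first verify that $\lambda\in\mathcal Q(\Sigma)$ if and only if $(\mathrm{Re}\,\lambda,|\lambda|^2)$ lies in the convex hull of the parabolic set $\{(t,t^2):t\in\Sigma\}\subset\mathbb R^2$. The lower boundary of this hull consists of the parabolic arcs over each $[\rho_-^{(j)},\rho_+^{(j)}]$ and chord segments over each gap $(m_j,M_j)$; one checks that a chord $s=x(m_j+M_j)-m_j M_j$ translates under $s=x^2+(\mathrm{Im}\,\lambda)^2$ to $(\mathrm{Im}\,\lambda)^2=(x-m_j)(M_j-x)$, i.e. to the boundary of $B(m_j,M_j)$; similarly the upper chord from $(m,m^2)$ to $(M,M^2)$ corresponds to the outer disk $B[m,M]$. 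Carathéodory then supplies, for every $\lambda\in\mathcal Q(\Sigma)$, a probability measure $\mu_\lambda=\sum_{i=1}^{3}p_i\,\delta_{t_i}$ on $\Sigma$ with $\int t\,d\mu_\lambda=\mathrm{Re}\,\lambda$ and $\int t^2\,d\mu_\lambda=|\lambda|^2$. Setting $(\mathcal H_\lambda,T_\lambda,u_\lambda):=(\mathbb C^3,\mathrm{diag}(t_1,t_2,t_3),(\sqrt{p_1},\sqrt{p_2},\sqrt{p_3}))$, a direct computation of $\langle(T_\lambda-z)^2 u_\lambda,u_\lambda\rangle=z^2-2z\,\mathrm{Re}\,\lambda+|\lambda|^2=(z-\lambda)(z-\bar\lambda)$ gives $\mathrm{Spec}_2(T_\lambda,\mathrm{span}\,u_\lambda)=\{\lambda,\bar\lambda\}$.

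\textbf{Assembly.} Choose $\{\lambda_k\}_{k\ge 1}$ dense in $F$. Form $T:=\bigoplus_k T_{\lambda_k}$ on $\mathcal H:=\bigoplus_k\mathcal H_{\lambda_k}$; augmenting with ancillary blocks if necessary, arrange $\mathrm{Spec}(T)=\Sigma$ exactly. Define $\mathcal L_l:=\bigoplus_k\mathcal L_{l,k}$ with $\mathrm{span}\,u_{\lambda_k}\subseteq\mathcal L_{l,k}$ for $k\le l$ and $\mathcal L_{l,k}$ progressively saturating $\mathcal H_{\lambda_k}$ on a slow schedule $L(k)$, so that $P_l\to I$ strongly. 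The direct-sum structure yields $\mathrm{Spec}_2(T,\mathcal L_l)=\bigcup_k\mathrm{Spec}_2(T_{\lambda_k},\mathcal L_{l,k})$, from which the lower Hausdorff bound $\sup_{\lambda\in F}\mathrm{dist}(\lambda,\mathrm{Spec}_2(T,\mathcal L_l))\to 0$ is immediate from $\{\lambda_k,\bar\lambda_k\}\subseteq\mathrm{Spec}_2(T,\mathcal L_l)$ (for $k\le l$) combined with the density of $\{\lambda_k\}$ in $F$.

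\textbf{Main obstacle.} The upper bound $\sup_{\mu\in\mathrm{Spec}_2(T,\mathcal L_l)}\mathrm{dist}(\mu,F)\to 0$ is the crux. By \eqref{nonpoll} applied blockwise, any $\mu$ contributed by the $k$-th block lies in $\mathcal Q(\mathrm{Spec}(T_{\lambda_k}))\subseteq B[\min_i t_i^{(k)},\max_i t_i^{(k)}]$, a closed disk of diameter $\max_i t_i^{(k)}-\min_i t_i^{(k)}$ containing $\lambda_k\in F$, so $\mathrm{dist}(\mu,F)\le\max_i t_i^{(k)}-\min_i t_i^{(k)}$. Controlling this diameter is delicate, because the variance condition forces the atoms $\{t_i^{(k)}\}$ to spread by at least $2|\mathrm{Im}\,\lambda_k|$, and so the diameter cannot be made uniformly small. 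The hypothesis \eqref{nonempty}---that $F$ meets every spectral interval---plays an essential role here: it allows the atoms $t_i^{(k)}$ to be chosen from (or close to) $F\cap\mathbb R$, so that the real contributions to $\mathrm{Spec}_2$ appearing once each block is saturated accumulate in $F$ as $k\to\infty$. When $F\cap\mathbb R$ is too sparse to accommodate this directly, the three-dimensional blocks should be replaced by infinite-dimensional dilations in which the spectrum of $T_{\lambda_k}$ appears in $\mathrm{Spec}_2(T,\mathcal L_l)$ only in the strong-operator limit and never at any finite stage $l$, deferring the real-point contribution away from every $\mathrm{Spec}_2(T,\mathcal L_l)$. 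The interplay of this deferred saturation with the requirement $\mathrm{Spec}(T)=\Sigma$ is where I expect the proof's main technical work to lie.
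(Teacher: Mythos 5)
Your first step (the moment characterization of $\mathcal{Q}(\Sigma)$, Carath\'eodory, and the observation that a unit vector $u=\sum_m\sqrt{t_m}\,u_m$ built from three spectral directions at $t_1,t_2,t_3$ yields $\left((\lambda I-T)^2u,u\right)=(\lambda-\mu)(\lambda-\overline{\mu})$) is correct and is exactly the device the paper uses to insert the non-real part of $F$ into the second order relative spectra; the paper realises it with vectors chosen in spectral subspaces of a single operator rather than as separate $3\times3$ blocks, which is only a cosmetic difference. The lower Hausdorff estimate is therefore fine.

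The upper estimate, which you rightly single out as the crux, is where the proposal breaks down, and the breakdown is structural rather than a matter of sharpening constants. Because $P_l\to I$ strongly, every finite-dimensional block $\mathcal{H}_{\lambda_k}=\mathbb{C}^3$ is eventually exhausted by $\mathcal{L}_{l,k}$, and once $\mathcal{L}_{l,k}=\mathcal{H}_{\lambda_k}$ the $k$-th block contributes all of $\mathrm{Spec}(T_{\lambda_k})=\{t_1^{(k)},t_2^{(k)},t_3^{(k)}\}$ to $\mathrm{Spec}_2(T,\mathcal{L}_l)$. These real atoms, together with those of the ancillary blocks you need in order to force $\mathrm{Spec}(T)=\Sigma$, necessarily accumulate on all of $\Sigma$, so $\mathrm{Spec}_2(T,\mathcal{L}_l)$ eventually contains points near every point of $\Sigma$ and $d_H(\mathrm{Spec}_2(T,\mathcal{L}_l),F)\not\to0$ unless $\Sigma\subseteq F$. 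Condition \eqref{nonempty} gives you only one point of $F$ in each spectral interval, not a dense subset, so ``choosing the atoms from $F\cap\mathbb{R}$'' is not available; and the intermediate stages of partial saturation of a block are equally uncontrolled. The fix you gesture at --- blocks whose real spectrum is ``deferred'' so that it never shows up in any $\mbox{\rm Spec}_2(T,\mathcal{L}_l)$ --- is precisely the content of the paper's Lemma \ref{mainl}, its main technical result: for each interval one builds an infinite-dimensional $T^{(j)}$ as a strong limit of nested Hermitian blocks coming from Toeplitz matrices whose symbols are boundary values of conformal maps onto thin ellipses, arranged so that along a special subsequence of subspaces the quadratic pencil factors into two triangular pencils, each with one-point spectrum $\{r_j\pm i\,o(1)\}$, even though $\mathrm{Spec}(B_l)$ fills out the whole interval. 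Nothing in your proposal constructs such an object, and without it the assembly does not prove the theorem. (That the choice of subsequence is essential --- the same operator along a different subsequence of subspaces has second order spectra converging to the whole interval --- is the point of Remark \ref{Ni2Ni}, and shows that a generic ``slow schedule'' of saturation cannot work.)
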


Note that
$$
\bigcup_{(\mathcal{L}_l) \in \mathfrak{P}(\mathcal{H})}{\lim_{l \to +\infty}} 
\mbox{\rm Spec}_2(T, \mathcal{L}_l) = \mbox{\rm Spec}(T) \cup 
\mathcal{Q}(\mbox{\rm Spec}_e(T)) , \ \ \ \forall\, T = T^* \in \mathcal{B}(\mathcal{H}) ,
$$
where  ``$\lim$'' is defined in an appropriate way (\cite{S1}, see also \cite{BS}).

\section{Auxiliary results}\label{aux}

\begin{proposition}\label{ext}
Let $B, M \in \mathcal{B}(\mathcal{H})$, $B^* = B$, $M^* = M \ge 0$. There exist a Hilbert space
$\mathcal{H}_0 \supseteq \mathcal{H}$  and $T = T^* \in \mathcal{B}(\mathcal{H}_0)$ such that
$B = PT|_\mathcal{H}$, $M = PT^2|_\mathcal{H}$, where $P : \mathcal{H}_0 \to \mathcal{H}$ is
the orthogonal projection, if and only if
\begin{equation}
\label{cond}
B^2 \le M .
\end{equation} 
\end{proposition}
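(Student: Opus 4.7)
The plan is to treat the two implications separately. For necessity, assume $T = T^*$ on $\mathcal{H}_0 \supseteq \mathcal{H}$ satisfies $B = PT|_{\mathcal{H}}$ and $M = PT^2|_{\mathcal{H}}$, where $P$ is the orthogonal projection onto $\mathcal{H}$. For any $x \in \mathcal{H}$ I would compute
$$
\langle Mx, x\rangle = \langle PT^2 x, x\rangle = \langle T^2 x, x\rangle = \|Tx\|^2,
$$
using $Px = x$ and $T^* = T$. Since $Bx = PTx$ and $P$ is a contraction,
$$
\langle B^2 x, x\rangle = \|Bx\|^2 = \|PTx\|^2 \le \|Tx\|^2 = \langle Mx, x\rangle,
$$
which gives $B^2 \le M$.

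For sufficiency, assume $B^2 \le M$ so that $N := M - B^2 \ge 0$ admits a bounded self-adjoint square root $N^{1/2}$. I would set $\mathcal{H}_0 := \mathcal{H} \oplus \mathcal{H}$, identify $\mathcal{H}$ with the first summand, and define the $2\times 2$ block operator
$$
T := \begin{pmatrix} B & N^{1/2} \\ N^{1/2} & -B \end{pmatrix} \in \mathcal{B}(\mathcal{H}_0).
$$
A direct check shows $T^* = T$, $PT|_{\mathcal{H}} = B$, and a block-matrix multiplication gives
$$
T^2 = \begin{pmatrix} B^2 + N & BN^{1/2} - N^{1/2} B \\ N^{1/2} B - BN^{1/2} & N + B^2 \end{pmatrix},
$$
so that $PT^2|_{\mathcal{H}} = B^2 + N = M$, as required. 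The choice $-B$ in the lower-right corner is not essential; any self-adjoint operator there would do, and the crucial point is only that the off-diagonal entry be a self-adjoint square root of $M - B^2$, which exists precisely when $M - B^2 \ge 0$.

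The two directions are both short, and I do not foresee a real obstacle: the necessity is a one-line application of the fact that $P$ is a contraction, and the sufficiency is a standard one-step dilation of the type used in the Naimark/Halmos dilation theory. The only subtlety worth isolating is that one needs $M - B^2 \ge 0$ as a \emph{positive operator inequality}, not merely in quadratic-form sense on $\mathcal{H}$, in order to extract the self-adjoint square root $N^{1/2}$; but since the hypothesis is stated as $B^2 \le M$ in the operator sense, this is automatic.
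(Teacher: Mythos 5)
Your proof is correct and follows essentially the same route as the paper: the necessity argument is identical, and the sufficiency argument is the same one-step self-adjoint dilation on $\mathcal{H}\oplus\mathcal{H}$ with off-diagonal blocks $(M-B^2)^{1/2}$, the paper simply taking $0$ rather than $-B$ in the lower-right corner (which, as you note, is immaterial since the top-left block of $T^2$ does not depend on that entry).
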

\begin{proof}
Suppose such $\mathcal{H}_0$ and $T$ exist. Then
\begin{eqnarray*}
&& (B^2x, x) = \|Bx\|^2 = \|PTx\|^2, \\
&& (Mx, x) = (PT^2x, x) = (T^2x, x) = \|Tx\|^2, \ \  \forall x \in \mathcal{H} .
\end{eqnarray*}
Hence $(B^2x, x) \le (Mx, x)$, $ \forall x \in \mathcal{H}$, i.e. \eqref{cond} holds.

Suppose now \eqref{cond} holds. Then $M - B^2 \ge 0$ has a nonnegative square root
$(M - B^2)^{1/2}$. Let $\mathcal{H}_0 := \mathcal{H}\oplus\mathcal{H}$, 
$P : \mathcal{H}_0 \to \mathcal{H}$ be the projection onto the first component and let
\begin{equation}
\label{ constr}
T := \begin{pmatrix}
  B    &  (M - B^2)^{1/2}  \\ \\
   (M - B^2)^{1/2}   &  0
\end{pmatrix} : 
\begin{array}{c}
\mathcal{H}  \\
\oplus \\
\mathcal{H}
\end{array}
\to 
\begin{array}{c}
\mathcal{H}  \\
\oplus \\
\mathcal{H}
\end{array} = \mathcal{H}_0 .
\end{equation}
Then $T^* = T$, $PT|_\mathcal{H} = B$,
$$
T^2 =
\begin{pmatrix}
  M   &  B (M - B^2)^{1/2}  \\ \\
  (M - B^2)^{1/2} B    &  M - B^2 \end{pmatrix} 
$$
and $PT^2|_\mathcal{H} = M$.
\end{proof}

\begin{lemma}\label{block}
For any $\rho_- < \rho_+ \in \mathbb{R}$, $r \in (\rho_-, \rho_+)$ and
$\delta, \varepsilon > 0$ there exist $N \in \mathbb{N}$ and
Hermitian matrices $B, R \in \mathbb{C}^{N\times N}$ such that $\|R\| < \varepsilon$,
$\mbox{\rm Spec}(B) \subset [\rho_-, \rho_+]$, the distance from any point of $[\rho_-, \rho_+]$
to $\mbox{\rm Spec}(B)$ is less than $\delta$, and all roots of the equation
\begin{equation}
\label{det}
\det (\lambda^2 I - 2 \lambda B + B^2 + R^2) = 0
\end{equation}
belong to the vertical interval $\{\lambda \in \mathbb{C} : \ \mbox{\rm Re}\, \lambda = r , \ 
|\mbox{\rm Im}\, \lambda| < \varepsilon\}$.
\end{lemma}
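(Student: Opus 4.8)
The plan is to shift the problem to the imaginary axis and then read off what the pencil eigenvectors must look like. Writing $A := B - rI$ and $\mu := \lambda - r$, the equation \eqref{det} becomes $\det\bigl((\mu I - A)^2 + R^2\bigr) = 0$, and the target is that every root be purely imaginary with $|\mu| < \varepsilon$. The basic tool is the identity obtained by pairing a normalised pencil eigenvector $u$ (i.e. $\bigl((\mu I - A)^2 + R^2\bigr)u = 0$) with itself:
$$
\mu^2 - 2\mu\langle Au, u\rangle + \|Au\|^2 + \|Ru\|^2 = 0 .
$$
Since $\|Au\|^2 + \|Ru\|^2 \ge \|Au\|^2 \ge \langle Au, u\rangle^2$ by Cauchy--Schwarz, this forces $\mbox{\rm Re}\,\mu = \langle Au, u\rangle$ and $(\mbox{\rm Im}\,\mu)^2 = \|Au\|^2 - \langle Au, u\rangle^2 + \|Ru\|^2$. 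Hence all roots lie on $\{\mbox{\rm Re}\,\lambda = r\}$ if and only if every pencil eigenvector is $A$-null ($\langle Au, u\rangle = 0$), and then $(\mbox{\rm Im}\,\mu)^2 = \|Au\|^2 + \|Ru\|^2$. The hypothesis $r \in (\rho_-, \rho_+)$ is exactly what makes $A$ indefinite, so $A$-null vectors exist; this is the structural reason the hypothesis is needed.

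Before constructing anything I would note why the statement is delicate. When $R = 0$ the roots of the pencil are precisely the eigenvalues of $A$ (each doubled), all real and spread across $[\rho_- - r, \rho_+ - r]$; and since $\|R\| < \varepsilon$ is tiny while $\|A\|$ is of order one, at a fixed matrix size the roots merely perturb and stay near the real eigenvalues of $A$. Collapsing them all onto the imaginary segment is therefore impossible at bounded size: it must exploit the non-normality of the companion linearisation of the pencil (each eigenvalue of $A$ contributes a size-two Jordan block when $R = 0$), amplified by making $\mbox{\rm Spec}(B)$ vastly over-dense, with eigenvalue gaps far smaller than $\|R\|$, so that a norm-small $R$ produces order-one migration of the roots. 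A second warning comes from the product of the roots: $\prod_j \gamma_j = \det(A^2 + R^2) \ge \det(A^2)$ must be smaller than $\varepsilon^{2N}$, which forces the spectrum of $B$ to be heavily concentrated near $r$, with only a thin family of eigenvalues reaching the endpoints.

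Guided by this, the construction I would attempt takes $A$ to consist of a dominant, very-high-multiplicity eigenvalue at (or just off) $0$, spanning a large subspace $\mathcal{K}$, together with a thin family of ``covering'' eigenvalues placed one per $\delta$-subinterval so that the distance from every point of $[\rho_-, \rho_+]$ to $\mbox{\rm Spec}(B)$ is below $\delta$; the covering eigenvalues are chosen with zero mean (and, more generally, so the odd characteristic coefficients can be annihilated). I would then pick $R$ with $\|R\| < \varepsilon$ having \emph{both} diagonal and off-diagonal parts in the eigenbasis of $A$, so that $R^2$ genuinely couples $\mathcal{K}$ to the covering modes and the pencil does not block-decouple (a purely off-diagonal $R$ makes $R^2$ block-diagonal and the covering modes keep their real roots). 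The point of the large subspace $\mathcal{K}$ is that it hosts the pencil eigenvectors, forcing $\|Au\|$ and hence $|\mbox{\rm Im}\,\mu|$ to be small, while the weak coupling to the covering modes is the free data one tunes so that $\det\bigl((\mu I - A)^2 + R^2\bigr)$ is an even polynomial in $\mu$ with all roots $\mu^2 \in (-\varepsilon^2, 0)$. Setting $B = A + rI$ then yields the asserted spectral and density properties.

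The main obstacle is the last tuning step: one must satisfy simultaneously an exact algebraic condition (the characteristic polynomial must be even, i.e. all $N$ odd coefficients vanish, which is what pins every $\mbox{\rm Re}\,\lambda$ to exactly $r$) and an inequality condition (all $2N$ roots on the segment of half-length $\varepsilon$), using only a matrix $R$ of norm below $\varepsilon$ and a prescribed $\delta$-dense spectrum. This is essentially an inverse/pole-placement problem: the abundance of parameters in $R$ together with the over-density makes existence plausible, but controlling \emph{all} the roots globally — not merely the few that naturally sit near $r$ — is the crux and is where a careful quantitative or asymptotic estimate is unavoidable.
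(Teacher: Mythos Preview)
Your proposal is not yet a proof: the analysis of the quadratic form on a pencil eigenvector is correct and the necessary conditions you extract (each eigenvector $A$-null, geometric mean of the shifted eigenvalues below $\varepsilon$) are real constraints, but you explicitly stop at the ``tuning step'' and call it the crux without carrying it out. That step is not a matter of parameter counting. You need $N$ exact algebraic relations (vanishing of all odd coefficients of a degree-$2N$ polynomial) together with $N$ open conditions on the root locations, achieved by a norm-$\varepsilon$ perturbation of a matrix whose spectrum is spread over an interval of order one; I do not see how to close this along the lines you sketch, and neither does the proposal.

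The paper bypasses the inverse problem entirely by a factorisation trick. One writes
\[
\lambda^2 I - 2\lambda B + B^2 + R^2 \;=\; \bigl(\lambda I - (B + iA)\bigr)\bigl(\lambda I - (B - iA)\bigr)
\]
for a Hermitian $A$ with $R^2 = A^2 - i[B,A] \ge 0$, so the roots of \eqref{det} are exactly the eigenvalues of $B \pm iA$. The task is then to make $B + iA$ have a \emph{single} eigenvalue $r + i\gamma$ with $0 < \gamma < \varepsilon$. This is done with Toeplitz matrices: take a conformal map $w$ from the disk to a thin ellipse around $[\rho_-,\rho_+]$ with $\mathrm{Re}\,w(0) = r$, set $b = \mathrm{Re}\,w|_{\mathbb{T}}$, $a = \mathrm{Im}\,w|_{\mathbb{T}}$, and let $B = T_N(b)$, $A = T_N(a) + cI$. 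Analyticity of $w$ makes $T_N(b+ia)$ lower triangular with constant diagonal $w(0)$, so $\mathrm{Spec}(B+iA) = \{r + i\gamma\}$; Szeg\H{o}'s theorem makes $\mathrm{Spec}(B)$ $\delta$-dense in $[\rho_-,\rho_+]$ for large $N$; and the shift $c = \sqrt{2\rho\varepsilon_0}$ secures $A^2 - i[B,A] \ge 0$ with $\|R\| < \varepsilon$. The collapse of all $2N$ roots that you identified as the obstacle is thus achieved in one stroke by triangularity, not by solving a pole-placement problem.
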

\begin{proof}
It is sufficient to prove the lemma for $\rho_\pm = \pm \rho$ , $\rho > 0$ as the general case can be
reduced to this one by dealing with $B - \frac{\rho_- + \rho_+}2\, I$ instead of $B$.

Let $\varepsilon_0$ be a small positive number to be specified later
and let $w$ be a conformal mapping of the unit disk onto the ellipse with the axes 
$$
[-\rho, \rho] + i\frac{\varepsilon_0}2 \ \  \mbox{ and } \ \   i[0, \varepsilon_0] ,
$$ 
such that
$\mbox{\rm Re}\, w(0) = r$.

Let $b \in C(\mathbb{T})$ and $a \in C(\mathbb{T})$ be the boundary values of 
$\mbox{\rm Re}\, w$ and $\mbox{\rm Im}\, w$ respectively. Then $b(\mathbb{T}) = [-\rho, \rho]$
and $a(\mathbb{T}) = [0, \varepsilon_0]$. 

For any $n \in \mathbb{N}$,  the $n\times n$ Toeplitz matrix $T_n(b)$ with the symbol 
$b$ is Hermitian and 
$$
\|T_n(b)\| \le \|T(b)\| = \|b\|_{\infty} = \rho ,
$$
where $T(b) : l^2 \to l^2$ is the corresponding Toeplitz operator. Hence
$$\mbox{\rm Spec}(T_n(b)) \subset [-\rho, \rho] .
$$
It follows from Szeg\"o's theorem (see, e.g., \cite[Theorem 5.10]{BG}) that the distance 
from any point of $[-\rho, \rho]$ to $\mbox{\rm Spec}(T_N(b))$ 
is less than $\delta$ provided $N$ is sufficiently large. Fix such an $N$ and set
$B := T_N(b)$, $A := \sqrt{2\rho\varepsilon_0}\, I + T_N(a) = A^*$.
 
Since $b + ia$ is the boundary value of the function $w$ analytic in the unit disk, 
$B + iA = i\sqrt{2\rho\varepsilon_0}\, I + T_N(b + ia)$ is a lower triangular matrix
with the diagonal entries equal to $i\sqrt{2\rho\varepsilon_0} + b_0 + ia_0$, where
\begin{eqnarray*}
&& b_0 := \frac{1}{2\pi} \int_0^{2\pi} \mbox{\rm Re}\, w(e^{it}) dt = 
\mbox{\rm Re}\, w(0) = r,   \\ 
&& a_0 := \frac{1}{2\pi} \int_0^{2\pi} \mbox{\rm Im}\, w(e^{it}) dt = \mbox{\rm Im}\, w(0)
\in  (0, \varepsilon_0) .
\end{eqnarray*}
Hence
\begin{eqnarray}
\label{AB}
&& \mbox{\rm Spec}(B + iA) = \left\{r + i\left(\sqrt{2\rho\varepsilon_0} + a_0\right)\right\} , \nonumber \\
&& \mbox{\rm Spec}(B - iA) = \mbox{\rm Spec}\left((B + iA)^*\right) = 
\left\{r - i\left(\sqrt{2\rho\varepsilon_0} + a_0\right)\right\} .
\end{eqnarray}

Consider the pencil 
$$
(\lambda I - (B + iA))(\lambda I - (B - iA)) = \lambda^2 I - 2 \lambda B + B^2 
-i [B, A] + A^2,
$$
where the square brackets denote the commutator. The Hermitian matrix $-i [B, A] + A^2$
is nonnegative. Indeed,
$$
i [B, A] = i [B, T_N(a)] \le  \left(2\|B\| \|T_N(a)\|\right) I \le 2\rho\varepsilon_0 I \le A^2 ,
$$
where the last inequality follows from the non-negativity of the Toeplitz matrix
$T_N(a)$ with the symbol $a \ge 0$. 

Let $R$ be the nonnegative square root of $-i [B, A] + A^2$.
Then
\begin{eqnarray*}
\det (\lambda^2 I - 2 \lambda B + B^2 + R^2) = \det\big((\lambda I - (B + iA))(\lambda I - (B - iA))\big)\\
= \det\big((\lambda I - (B + iA))\big)
\det\big((\lambda I - (B - iA))\big) .
\end{eqnarray*}
Hence it follows from \eqref{AB} that all roots of \eqref{det} belong to the interval 
$\{\lambda \in \mathbb{C} : \ \mbox{\rm Re}\, \lambda = r , \ 
|\mbox{\rm Im}\, \lambda| < \varepsilon\}$ provided 
$\sqrt{2\rho\varepsilon_0} + \varepsilon_0 < \varepsilon$.

It remains to estimate the norm of $R$.
\begin{eqnarray*}
\|Rx\|^2 = (R^2x, x) = ((-i [B, A] + A^2)x, x) \le 2\|B\|\|T_N(a)\| + \|A\|^2\\
\le 2\rho \varepsilon_0 + 
\left(\sqrt{2\rho\varepsilon_0} + \varepsilon_0\right)^2 , \ \ \ x \in \mathbb{C}^N, \ \ \|x\| = 1. 
\end{eqnarray*}
Choosing $\varepsilon_0 > 0$ such that the right-hand side is less than $\varepsilon^2$ we
get $\|R\| < \varepsilon$.
\end{proof}

\begin{remark}\label{TB}
{\rm Let
$$
T := \begin{pmatrix}
   B   &  R  \\
  R    &  0
\end{pmatrix} : \mathbb{C}^{2N} \to \mathbb{C}^{2N} .
$$
Then the set of the roots of \eqref{det} is equal to $\mbox{\rm Spec}_2(T, \mathbb{C}^N)$.
Since 
$$
\left\|T - \begin{pmatrix}
   B   &  0  \\
  0    &  0
\end{pmatrix}\right\| = \|R\| < \varepsilon ,
$$
$\mbox{\rm Spec}(T) \subset [\rho_- - \varepsilon, \rho_+ + \varepsilon]$ (see, e.g., 
\cite[Theorem V.4.10]{Kat}).}
\end{remark}

\begin{lemma}\label{ind}
Let $\varrho_-, \varrho_+ \in \mathbb{R}$ and let $T \in \mathbb{C}^{n\times n}$ be 
a Hermitian matrix such that 
$\mbox{\rm Spec}(T) \subset [\varrho_-, \varrho_+]$. Then
for any $\rho_- < \varrho_-$, any $\rho_+ > \varrho_+$ and any 
$r \in (\rho_-, \rho_+)$, $\delta, \varepsilon > 0$, 
one can choose $N, B$ and $R$ 
in Lemma \ref{block} in such a way that $N > n$ and
$$
B = \begin{pmatrix}
   T   &  S  \\
    S^*  &  K
\end{pmatrix} 
$$
with $\|S\|_{\mathbb{C}^{N - n} \to \mathbb{C}^n} < \delta$.
\end{lemma}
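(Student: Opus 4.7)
First I would reduce to the symmetric case $\rho_\pm=\pm\rho$ by translation, exactly as at the start of the proof of Lemma~\ref{block}. Then I would apply Lemma~\ref{block} itself with sharpened parameters $\delta_0<\delta/2$ and $\varepsilon_0<\varepsilon/2$ to the data $(\rho_-,\rho_+,r)$, obtaining $N_0$, $B_0=T_{N_0}(b)$, the auxiliary matrix $A_0=\sqrt{2\rho\varepsilon_0}\,I+T_{N_0}(a)$ and $R_0$, with $B_0+iA_0$ lower triangular and carrying the one-point spectrum $\{r+ic_0\}$, $\|R_0\|<\varepsilon_0$, and $\mathrm{Spec}(B_0)$ $\delta_0$-dense in $[\rho_-,\rho_+]$.

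Since $\mathrm{Spec}(B_0)$ is $\delta_0$-dense in $[\rho_-,\rho_+]$ and $\mathrm{Spec}(T)\subset[\varrho_-,\varrho_+]\subset(\rho_-,\rho_+)$, I would (enlarging $N_0$ if necessary) pick $n$ distinct eigenvalues $\lambda_{j_1},\dots,\lambda_{j_n}$ of $B_0$ with $|\lambda_{j_k}-\mu_k|<\delta_0$, where $\mu_1,\dots,\mu_n$ enumerate the eigenvalues of $T$. Using the corresponding orthonormal eigenvectors together with a unitary $U_T$ diagonalising $T$, I would build an isometry $V:\mathbb{C}^n\to\mathbb{C}^{N_0}$ such that $V(\mathbb{C}^n)$ is a $B_0$-invariant subspace and $\|V^*B_0V-T\|<\delta_0$. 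The rank-$n$ correction $B:=B_0+V(T-V^*B_0V)V^*$ then satisfies $BV=VT$, so $V(\mathbb{C}^n)$ is $B$-invariant with $V^*BV=T$. Choosing coordinates so that the first $n$ basis vectors of $\mathbb{C}^{N_0}$ coincide with the columns of $V$, the matrix $B$ has the block-diagonal form $\begin{pmatrix}T&0\\0&K\end{pmatrix}$, so $S=0<\delta$; the spectral and density bounds of Lemma~\ref{block} carry over since $\|B-B_0\|\le\delta_0$.

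The difficult step is producing the new $R$ with $\|R\|<\varepsilon$ such that all roots of \eqref{det} for $B$ lie on the vertical interval at $r$. The factorisation
\[
\lambda^2I-2\lambda B+B^2+R^2=(\lambda I-(B+iA))(\lambda I-(B-iA))
\]
with $R^2=A^2-i[B,A]$ still holds for any Hermitian $A$, so the vertical-line property reduces to building a Hermitian $A$, close to $A_0$, such that $B+iA$ has one-point spectrum $\{r+ic\}$ for some $|c|<\varepsilon$. I would look for $A$ of the form $A:=A_0+V\Theta V^*$ for a Hermitian $\Theta\in\mathbb{C}^{n\times n}$ to be chosen. Because both the perturbation $V(T-V^*B_0V)V^*$ of $B_0$ and the correction $V\Theta V^*$ of $A_0$ are supported on the $B_0$-invariant subspace $V(\mathbb{C}^n)$, the one-point spectrum condition for $B+iA$ can be traced back through the triangular structure of $B_0+iA_0$ to a Fillmore-type condition on the $n$-dimensional compression. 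A small scalar shift absorbed into $K$ takes care of the trace relation $\mathrm{tr}(B)=N_0 r$ required for the Fillmore condition, and this shift is $O(1/N_0)$ so it does not affect the density or spectral containment.

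The main obstacle I expect is proving that $\Theta$ can be taken with small enough norm that $\|R\|<\varepsilon$. Since $B_0+iA_0$ is nilpotent up to the scalar shift $r+ic_0$, its spectrum is highly sensitive to perturbation, and a naive application of Fillmore would allow $\|\Theta\|$ comparable to $\|T-rI_n\|$, which is not small. The key is that the perturbation and the correction act on the same $n$-dimensional invariant subspace, where they can be analysed in closed form; combined with the freedom in choosing $\lambda_{j_k}$ (exploiting the density of $\mathrm{Spec}(B_0)$ near each $\mu_k$) this should yield $\|\Theta\|=O(\sqrt{\rho\varepsilon_0})$, matching the natural scaling in Lemma~\ref{block} and giving $\|R\|<\varepsilon$ once $\varepsilon_0$ is chosen small enough.
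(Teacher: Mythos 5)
Your plan has a genuine gap at exactly the point you flag as ``the difficult step'', and that step is where the whole proof would have to live. By insisting that $V(\mathbb{C}^n)$ be $B$-invariant with compression exactly $T$ (so that $S=0$), you are forced to replace $B_0$ by the additively perturbed matrix $B=B_0+V(T-V^*B_0V)V^*$. This destroys the one property that Lemma \ref{block} worked hard to arrange, namely that $B_0+iA_0$ is, up to a scalar, a lower triangular matrix with constant diagonal and hence has one-point spectrum. Your subspace $V(\mathbb{C}^n)$ is $B_0$-invariant but not $(B_0+iA_0)$-invariant, so the perturbation does not interact with the triangular structure in any controllable way; the claim that a Fillmore-type correction $\Theta$ of size $O(\sqrt{\rho\varepsilon_0})$ restores the one-point spectrum is not substantiated. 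Worse, the constraint $\|R\|<\varepsilon$ forces $\|A\|$ to be small of order $\varepsilon^2/\rho$, while the perturbation of $B_0$ you have introduced has size of order $\delta_0$ --- and $\delta$ and $\varepsilon$ are independent parameters, so $\delta_0$ need not be small relative to $\varepsilon$. Since the spectrum of a matrix of the form (constant diagonal)$+$(nilpotent triangular part) is maximally sensitive to perturbation, there is no reason to expect the required $A$ to exist with small norm; as written, this is an open sub-problem at least as hard as Lemma \ref{block} itself.

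The gap is avoidable because the lemma does not ask for $S=0$, only for $\|S\|<\delta$. The paper's proof never perturbs $B'$ additively: it applies Lemma \ref{block} with density parameter $\delta_0/(2n)$, picks for each eigenvalue $\mu_k$ of $T$ a pair of distinct eigenvalues $\lambda_{-k}\le\mu_k\le\lambda_k$ of $B'$ with $\lambda_k-\lambda_{-k}<2\delta$, and rotates the corresponding eigenvectors, $v_k=\sqrt{1-t_k}\,u_{-k}+\sqrt{t_k}\,u_k$ with $\mu_k=(1-t_k)\lambda_{-k}+t_k\lambda_k$, so that $(B'v_k,v_k)=\mu_k$ exactly while the off-diagonal entries $(B'v_k,v_{-k})=(\lambda_k-\lambda_{-k})\sqrt{t_k(1-t_k)}$ are less than $\delta$. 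Composing with a unitary diagonalising $T$ yields a single unitary $V$ such that $B:=V^*B'V$ has exactly $T$ in the top-left corner and $\|S\|<\delta$; setting $R:=V^*R'V$ leaves the determinant in \eqref{det} unchanged under conjugation, so every conclusion of Lemma \ref{block} carries over verbatim with no new analysis of the pencil. You should reorganise your argument around this observation --- trade the exact invariance of the subspace for a small off-diagonal block, and keep the modification of $(B',R')$ purely unitary.
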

\begin{proof}
Let $\mu_1, \dots, \mu_n$ be the eigenvalues of $T$ repeated according to their multiplicities
and let $N \ge 2n$, $B', R'$ satisfy the conditions in Lemma \ref{block} 
with $\delta_0/(2n)$ in place of $\delta$, where $\delta_0 = \min\{\delta, \, \varrho_- - \rho_-, \,
\rho_+ - \varrho_+\}$.
The distance between any two consecutive distinct eigenvalues of $B'$ is less than $\delta_0/n$ as 
otherwise the distance from the centre of the interval between the eigenvalues to 
$\mbox{\rm Spec}(B')$ would have been greater than or equal to $\delta_0/(2n)$. Since the 
multiplicity of each $\mu_k$ is at most $n$, there exist distinct eigenvalues of $B'$ which
we denote by $\lambda_{\pm k}$, $k = 1, \dots, n$ and which satisfy the conditions
$$
\lambda_{-k} \le \mu_k \le \lambda_k \ \mbox{ and } \ \lambda_k - \lambda_{-k} < 2\delta .
$$
Then there exist $t_k \in [0, 1]$ such that $\mu_k = (1 - t_k)\lambda_{-k} + t_k \lambda_k$.
Let $u_m \in \mathbb{C}^N$, $m = \pm 1, \dots, \pm n$ be a normalised eigenvector of $B'$
corresponding to $\lambda_m$ and set
$$
v_k := \sqrt{1 - t_k}\, u_{-k} + \sqrt{t_k}\, u_k , \ \ 
v_{-k} := -\sqrt{t_k}\, u_{-k} + \sqrt{1 - t_k}\, u_k . 
$$
Since $\{u_{\pm k}\}_{k = 1}^n$ is an orthonormal set, $\|v_k\| = 1 = \|v_{-k}\|$,
$$
(v_k, v_{-k}) = -\sqrt{1 - t_k}\,\sqrt{t_k} + \sqrt{t_k}\,\sqrt{1 - t_k} = 0 ,
$$
and $(v_m, v_j) = 0$ if $m, j = \pm 1, \dots, \pm n$, $m \not= \pm j$. Hence
$\{v_{\pm k}\}_{k = 1}^n$ is an orthonormal set. Further,
\begin{eqnarray}\label{almdiag1}
(B'v_k, v_k) &=& (\sqrt{1 - t_k}\, \lambda_{-k} u_{-k} + \sqrt{t_k}\, \lambda_k u_k,
\sqrt{1 - t_k}\, u_{-k} + \sqrt{t_k}\, u_k) \nonumber \\
&=& (1 - t_k)\lambda_{-k} + t_k \lambda_k = \mu_k ,\\
(B'v_k, v_{-k}) &=& (\sqrt{1 - t_k}\, \lambda_{-k} u_{-k} + \sqrt{t_k}\, \lambda_k u_k,
-\sqrt{t_k}\, u_{-k} + \sqrt{1 - t_k}\, u_k)  \nonumber \\
&=& (\lambda_k - \lambda_{-k})\, \sqrt{1 - t_k}\,\sqrt{t_k} \in [0, \delta) , \nonumber 
\end{eqnarray} 
since $0 \le \sqrt{1 - t_k}\,\sqrt{t_k} \le 1/2$. It is also clear that
\begin{equation}
\label{almdiag2}
(B'v_k, v_m) = 0 , \ \ m \not= \pm k . 
\end{equation}
Let $U \in \mathbb{C}^{N\times N}$ be a unitary matrix such that
$$
U(\underbrace{0, \dots, 0, 1}_k, 0, \dots, 0)^T = \begin{cases}
v_k,  & k = 1. \dots, n, \\
 v_{n - k},   & k = n + 1, \dots, 2n .
\end{cases}
$$
Then
$$
U^*B'U = \begin{pmatrix}
\mbox{\rm diag}\{\mu_1, \dots, \mu_n\}   &  S'  \\
    (S')^*  &  K
\end{pmatrix} , 
$$
where $S' = \left(s_{kj}\right)_{n\times(N - n)}$, $|s_{kj}| < \delta$ if $j = n + k$, \
$s_{kj} = 0$ if $j \not= n + k$, \ $k = 1, \dots, n$, \ $j = n + 1, \dots, N$ (see \eqref{almdiag1},
\eqref{almdiag2}). It is easy to see that $\|S'\|_{\mathbb{C}^{N - n} \to \mathbb{C}^n} < \delta$.

Let $U_0 \in \mathbb{C}^{n\times n}$ be a unitary matrix such that 
$U_0TU_0^* = \mbox{\rm diag}\{\mu_1, \dots, \mu_n\}$, i.e. 
$U_0^*\,\mbox{\rm diag}\{\mu_1, \dots, \mu_n\}U_0 = T$, and let 
$$
U_1 := \begin{pmatrix}
   U_0   &  0  \\
   0   &  I_{N - n}
\end{pmatrix} .
$$
Then $U_1$ is a unitary matrix and
$$
U_1^*U^*B'UU_1 = \begin{pmatrix}
U_0^*\,\mbox{\rm diag}\{\mu_1, \dots, \mu_n\}U_0  &  U_0^*S'  \\
    (S')^*U_0  &  K
\end{pmatrix} = \begin{pmatrix}
   T   &  S  \\
    S^*  &  K
\end{pmatrix} , 
$$
where $S := U_0^*S'$. It is clear that 
$\|S\|_{\mathbb{C}^{N - n} \to \mathbb{C}^n} = 
\|S'\|_{\mathbb{C}^{N - n} \to \mathbb{C}^n} < \delta$.

Let
$$
B := V^*B'V, \ \ R := V^*R'V,
$$
where $V := UU_1$ is a unitary matrix. Then $B^* = B$, $R^* = R$, \ 
$\mbox{\rm Spec}(B) = \mbox{\rm Spec}(B')$, \ $\|R\| = \|R'\| < \varepsilon$, and
all zeros of the polynomial
\begin{eqnarray*}
\det (\lambda^2 I - 2 \lambda B + B^2 + R^2) &=& 
\det (V^*(\lambda^2 I - 2 \lambda B' + (B')^2 + (R')^2)V) \\
&=& \det (\lambda^2 I - 2 \lambda B' + (B')^2 + (R')^2)
\end{eqnarray*}
belong to the interval 
$\{\lambda \in \mathbb{C} : \ \mbox{\rm Re}\, \lambda = r , \ 
|\mbox{\rm Im}\, \lambda| < \varepsilon\}$.
\end{proof}

Let $T \in \mathbb{C}^{n\times n}$ and $m \le n$, $m \in \mathbb{N}$. Then for any
$\varepsilon > 0$ there exists $\Delta(T, m, \varepsilon) > 0$ such that for any
$D \in \mathbb{C}^{m\times m}$ with $\|D\| < \Delta(T, m, \varepsilon)$ the distance
from any root of the equation
$$
\det\left(P_m (\lambda I - T)^2|_{\mathbb{C}^m} + D\right) = 0
$$
to $\mbox{\rm Spec}_2(T, \mathbb{C}^m)$  is less than $\varepsilon$ and so is the distance from
any point of $\mbox{\rm Spec}_2(T, \mathbb{C}^m)$ to a root of this equation
(see, e.g.,  \cite[Theorem 4.10c]{Hen}). \\

We will use the following notation
$$
\ell^2(N) := \left\{x = (x_k)_{k \in \mathbb{N}} \in \ell^2 | \  x_k = 0, \ k > N\right\} \cong
\mathbb{C}^N 
$$
and will identify vectors $(x_1, \dots, x_N) \in \mathbb{C}^N$ with
$(x_1, \dots, x_N, 0, 0, \dots) \in \ell^2(N) \subset \ell^2$.
\begin{lemma}\label{mainl}
For any $\rho_- < \rho_+ \in \mathbb{R}$, $r \in (\rho_-, \rho_+)$ and 
any sequence $\alpha_l \in (0, 1)$, $l \in \mathbb{N}$ converging to $0$ 
there exist a self-adjoint operator $T \in \mathcal{B}(\ell^2)$ and
$N_l \in \mathbb{N}$, $l \in \mathbb{N}$ such that  
$\mbox{\rm Spec}(T) = [\rho_-, \rho_+]$, \ $N_l \uparrow +\infty$ as $l \uparrow +\infty$, and
\begin{equation}
\label{Spec2}
\mbox{\rm Spec}_2(T, \ell^2(N_l)) \subset \{\lambda \in \mathbb{C} : \ 
|\lambda - r| < \alpha_l\} , \ \ \forall l \in \mathbb{N} .
\end{equation}
\end{lemma}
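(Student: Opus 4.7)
The plan is to construct $T$ as the strong limit of a nested family of finite-dimensional Hermitian matrices $T_l\in\mathbb{C}^{2N_l\times 2N_l}$ obtained by alternating Lemma~\ref{ind} with the dilation of Proposition~\ref{ext}. Fix rapidly decreasing $\delta_l,\varepsilon_l\downarrow 0$ (in particular $\varepsilon_l<\alpha_l/2$) and a nested family of subintervals with $\rho_-<\rho_-^{(l+1)}<\rho_-^{(l)}-\varepsilon_l$, $\rho_+^{(l)}+\varepsilon_l<\rho_+^{(l+1)}<\rho_+$ and $\rho_\pm^{(l)}\to\rho_\pm$. Initialise $T_1=\left(\begin{smallmatrix}\widetilde B_1 & \widetilde R_1\\ \widetilde R_1 & 0\end{smallmatrix}\right)$ from Lemma~\ref{block}. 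Inductively, given $T_l$ on $\mathbb{C}^{2N_l}$ with $\mathrm{Spec}(T_l)\subset[\rho_-^{(l)}-\varepsilon_l,\rho_+^{(l)}+\varepsilon_l]$, apply Lemma~\ref{ind} with $T_l$ in the role of $T$ and parameters $\rho_\pm^{(l+1)},r,\delta_{l+1},\varepsilon_{l+1}$ to obtain $N_{l+1}>2N_l$, a Hermitian matrix $\widetilde B_{l+1}\in\mathbb{C}^{N_{l+1}\times N_{l+1}}$ having $T_l$ as its principal $(2N_l)$-submatrix (with off-diagonal coupling block $\widetilde S_{l+1}$ of norm $<\delta_{l+1}$), and a Hermitian $\widetilde R_{l+1}$ with $\|\widetilde R_{l+1}\|<\varepsilon_{l+1}$ such that the roots of $\det(\lambda^2 I-2\lambda\widetilde B_{l+1}+\widetilde B_{l+1}^2+\widetilde R_{l+1}^2)=0$ lie in $\{|\lambda-r|<\varepsilon_{l+1}\}$. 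Define $T_{l+1}:=\left(\begin{smallmatrix}\widetilde B_{l+1} & \widetilde R_{l+1}\\ \widetilde R_{l+1} & 0\end{smallmatrix}\right)$; by Proposition~\ref{ext} and Remark~\ref{TB} one has $P_{N_{l+1}}T_{l+1}P_{N_{l+1}}=\widetilde B_{l+1}$, $P_{N_{l+1}}T_{l+1}^2P_{N_{l+1}}=\widetilde B_{l+1}^2+\widetilde R_{l+1}^2$, and $\mathrm{Spec}_2(T_{l+1},\ell^2(N_{l+1}))\subset\{|\lambda-r|<\varepsilon_{l+1}\}$. Since $T_{l+1}$ contains $T_l$ as its principal $(2N_l)$-submatrix, the family $\{T_l\}_l$ determines a unique bounded self-adjoint $T\in\mathcal{B}(\ell^2)$ whose top-left $(2N_l)\times(2N_l)$ block is $T_l$ for every $l$.

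For the spectrum, each eigenvector of $\widetilde B_l$ embedded in $\ell^2(N_l)\subset\ell^2$ is an approximate eigenvector of $T$ with defect $\le\varepsilon_l+\|Q_{2N_l}TP_{N_l}\|\to 0$; combined with Szeg\H{o}-type density of $\mathrm{Spec}(\widetilde B_l)$ in $[\rho_-^{(l)},\rho_+^{(l)}]$ and $[\rho_-^{(l)},\rho_+^{(l)}]\to[\rho_-,\rho_+]$, this forces $\mathrm{Spec}(T)\supseteq[\rho_-,\rho_+]$. For the converse inclusion, any $\lambda$ with $d:=\mathrm{dist}(\lambda,[\rho_-,\rho_+])>0$ satisfies $\mathrm{dist}(\lambda,\mathrm{Spec}(T_l))\ge d$ for all $l$ (since $\mathrm{Spec}(T_l)\subset[\rho_-,\rho_+]$ by the margin chosen above), so $\|(T_l-\lambda)^{-1}\|\le 1/d$ uniformly; applying the identity $(T_l-\lambda)P_{2N_l}x=P_{2N_l}(T-\lambda)x-P_{2N_l}TQ_{2N_l}x$ to a hypothetical Weyl sequence $x_n$ for $\lambda$ (with $l=l(n)\to\infty$ chosen so that $\|Q_{2N_{l(n)}}x_n\|\to 0$) produces a contradiction.

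For the second-order spectra, $P_{N_l}TP_{N_l}=\widetilde B_l$ holds exactly while $P_{N_l}T^2P_{N_l}=\widetilde B_l^2+\widetilde R_l^2+E_l$ with residual $E_l:=P_{N_l}TQ_{2N_l}TP_{N_l}$. At step $l+k$ the rows of $T$ indexed by $\ell^2(N_l)$ couple into the newly added block only through sub-blocks of $\widetilde S_{l+k}$ and $\widetilde R_{l+k}$, whose norms are bounded by $\delta_{l+k}$ and $\varepsilon_{l+k}$; the orthogonality of these row-ranges across distinct $k$ gives $\|E_l\|\le\|Q_{2N_l}TP_{2N_l}\|^2\le\sum_{k\ge 1}(\delta_{l+k}^2+\varepsilon_{l+k}^2)$. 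The continuity result stated just before Lemma~\ref{mainl} then places the roots of the actual pencil within $\alpha_l/2$ of $\mathrm{Spec}_2(T_l,\ell^2(N_l))\subset\{|\lambda-r|<\varepsilon_l\}$, and since $\varepsilon_l<\alpha_l/2$ this yields $\mathrm{Spec}_2(T,\ell^2(N_l))\subset\{|\lambda-r|<\alpha_l\}$. The principal obstacle is the interlocking parameter choice: the perturbation thresholds $\Delta(T_l,N_l,\alpha_l/2)$ become available only after step $l$ has been completed, so $\delta_k,\varepsilon_k$ for $k>l$ must be chosen (inductively, via a standard diagonal argument) smaller than all previously generated thresholds so that $\sum_{k>l}(\delta_k^2+\varepsilon_k^2)<\Delta(T_l,N_l,\alpha_l/2)$ for every $l$.
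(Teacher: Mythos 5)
Your construction is essentially the paper's own proof: iterate Lemma~\ref{ind} to obtain a nested sequence $T_l$ whose compressions to $\ell^2(2N_l)$ and $\ell^2(N_l)$ are exactly $T_l$ and $\widetilde B_l$, pass to the strong limit, and use the threshold $\Delta(T_l,N_l,\alpha_l/2)$ --- secured by forcing all later $\varepsilon_k,\delta_k$ to decay geometrically below the previously generated thresholds --- to transfer the localisation of $\mathrm{Spec}_2(T_l,\ell^2(N_l))$ near $r$ to $T$ itself. Your direct orthogonal-block estimate of $\|Q_{2N_l}TP_{2N_l}\|^2$ and the Weyl-sequence argument for $\mathrm{Spec}(T)\subseteq[\rho_-,\rho_+]$ are only cosmetic variants of the paper's telescoping sums and norm bound, so the two arguments coincide in substance.
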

\begin{proof}
Similarly to the proof of Lemma \ref{block} we can assume that $[\rho_-, \rho_+] = [-2, 2]$ 
as the general case can be
reduced to this one by dealing with 
$$\frac{4}{\rho_+ - \rho_-}\left(T - \frac{\rho_- + \rho_+}2\, I\right)
$$
 instead of $T$.

Let $\rho_0 = \varrho_0 = 0$, $\delta_0 = \varepsilon_0 = 1/4$, $N_0 = 1$, $B_0 = 0$, 
$T_0 = \begin{pmatrix}
0  &  0  \\
0  & 0 
\end{pmatrix}_{2\times 2}$. Using Lemmas \ref{block}, \ref{ind} and Remark \ref{TB} we
can successively construct $N_l$, $\delta_l = \varepsilon_l$, $B_l$, $T_l$ such that 
$B_l^* = B_l : \ell^2(N_l) \to \ell^2(N_l)$, $T_l^* = T_l : \ell^2(2N_l) \to \ell^2(2N_l)$,
$$
T_l := \begin{pmatrix}
   B_l   &  R_l  \\
  R_l    &  0
\end{pmatrix} ,
$$
$\|R_l\| < \varepsilon_l$,
$\mbox{\rm Spec}(B_l) \subset [-\rho_l, \rho_l]$, the distance from any point of $[-\rho_l, \rho_l]$
to $\mbox{\rm Spec}(B_l)$ is less than $\delta_l$, \ $\rho_l = 2 - 2^{-l}$,
\begin{equation}
\label{epsdel}
\delta_l = \varepsilon_l < \frac12\, \min\left\{\sqrt{\Delta(T_{l - 1}, N_{l - 1}, \alpha_{l - 1}/2)}\, ,\, 
\alpha_l , \, \varepsilon_{l - 1}\right\} ,
\end{equation}
\begin{eqnarray*}
& \mbox{\rm Spec}_2(T_l, \ell^2(N_l)) \subset \{\lambda \in 
\mathbb{C} : \ |\lambda - r| < \varepsilon_l\}
\subset \{\lambda \in \mathbb{C} : \ |\lambda - r| < \alpha_l/2\} , & \\
& B_{l + 1} = \begin{pmatrix}
   T_l   &  S_l  \\
    S_l^*  &  K_l
\end{pmatrix} , &
\end{eqnarray*}
$\|S_l\| < \delta_{l + 1}$, and $\mbox{\rm Spec}(T_l) \subset 
[-(\rho_l + \varepsilon_l), \rho_l + \varepsilon_l]
\subset [-\varrho_l, \varrho_l]$, $\varrho_l = 2 - 3\cdot2^{-l - 2} < \rho_{l + 1}$. The last inclusion
follows from \eqref{epsdel} as
\begin{equation}
\label{2}
\varepsilon_l < \frac{\varepsilon_{l - 1}}{2} < \cdots < \frac{\varepsilon_{0}}{2^l} = 2^{-l - 2} ,
\end{equation}
and 
$$
\rho_l + \varepsilon_l < 2 - 2^{-l} + 2^{-l - 2} = 2 - 3\cdot2^{-l - 2} = \varrho_l <
2 - 2^{-l - 1} =  \rho_{l + 1} .
$$

Since $T_l^* = T_l$ and $\mbox{\rm Spec}(T_l) \subset [-\varrho_l, \varrho_l]$,
\begin{equation}
\label{bound}
\|T_l\| \le \varrho_l = 2 - 3\cdot2^{-l - 2} < 2, \ \ \ \forall l \in \mathbb{N} .
\end{equation}

Let
$$
\widehat{B}_l := \begin{pmatrix}
   B_l   &  0  \\
   0   &  0
\end{pmatrix} : \ell^2 \to \ell^2 , \ \ \ 
\widehat{T}_l := \begin{pmatrix}
   T_l   &  0  \\
   0   &  0
\end{pmatrix} : \ell^2 \to \ell^2 .
$$
Suppose $x \in \ell^2(2 N_j)$, $j \le l$, $\|x\| \le 1$. Then
\begin{eqnarray*}
\left\|\widehat{T}_{l + 1}x - \widehat{T}_l x\right\| &\le& 
\left\|\widehat{T}_{l + 1}x - \widehat{B}_{l + 1} x\right\| +
\left\|\widehat{B}_{l + 1}x - \widehat{T}_l x\right\| =
\|R_{l + 1}x\| + \|S_l^*x\| \\
&<&  \varepsilon_{l + 1} + \delta_{l + 1} <
2^{-l - 3} + 2^{-l - 3} = 2^{-l - 2}
\end{eqnarray*} 
(see \eqref{2}), and therefore
\begin{eqnarray*}
\left\|\widehat{T}_{l + m}x - \widehat{T}_l x\right\| &\le& 
\sum_{p = 0}^{m - 1} \left\|\widehat{T}_{l + p + 1}x - \widehat{T}_{l + p} x\right\|
< \sum_{p = 0}^{m - 1} 2^{-l - p - 2} \\
&=& 2^{-l - 1} - 2^{-l - m - 1} < 2^{-l - 1} , \ \ \ m \in \mathbb{N} . 
\end{eqnarray*} 
Hence $\left(\widehat{T}_l x\right)_{l \in \mathbb{N}}$ is a convergent sequence in $\ell^2$ for 
any $x \in \ell^2(2N_j)$, $\forall j \in \mathbb{N}$. Since $\left\|\widehat{T}_l\right\| = 
\|T_l\| < 2$, $\forall l \in \mathbb{N}$ (see \eqref{bound}), the sequence 
$\left(\widehat{T}_l\right)_{l \in \mathbb{N}}$
is strongly convergent. Let $T \in \mathcal{B}(\ell^2)$ be its limit. Then $T^* = T$, $\|T\| \le 2$ and
\begin{equation}
\label{TTl}
\left\|Tx - \widehat{T}_l x\right\| \le 2^{-l - 1} , \ \ \ x \in \ell^2(2N_l), \ \ \|x\| \le 1 .
\end{equation}
Further, $\mbox{\rm Spec}(T) = [-2, 2]$. Indeed, take any $\lambda \in  [-2, 2]$. The distance 
from $\lambda$ to $\mbox{\rm Spec}(B_l)$ is less than $2^{-l} + \delta_l = 2^{-l} + \varepsilon_l <
2^{-l} + 2^{-l - 2}$ (see \eqref{2}). Using \cite[Theorem V.4.10]{Kat}) as in Remark \ref{TB}, one can
show that the distance 
from $\lambda$ to $\mbox{\rm Spec}(T_l)$ is less than $2^{-l} + 2^{-l - 2} + \varepsilon_l < 
2^{-l} + 2^{-l - 1}$. Hence there exists an eigenvector $x_l \in \ell^2(2N_l)$ of $T_l$
such that $\|x_l\| = 1$ and
$\|T_lx_l - \lambda x_l\| < 2^{-l} + 2^{-l - 1}$. It follows from \eqref{TTl} that
$$
\|Tx_l - \lambda x_l\| < 2^{-l} + 2^{-l - 1} + 2^{-l - 1} = 2^{-l + 1} , \ \ \ l \in \mathbb{N} .
$$
Therefore, $\lambda \in \mbox{\rm Spec}(T)$.

By construction, 
\begin{eqnarray*}
T_l x = P_{2N_l} B_{l + 1} x = P_{2N_l} P_{N_{l + 1}} T_{l + 1} x = 
P_{2N_l} T_{l + 1} x = P_{2N_l} P_{2 N_{l + 1}} T_{l + 2} x \\ 
= P_{2N_l} T_{l + 2} x = \cdots = P_{2N_l} T_{l + m} x = P_{2N_l} \widehat{T}_{l + m} x =
\cdots  , \ \ \ \forall x \in \ell^2(2N_l).
\end{eqnarray*}
So,
\begin{equation}
\label{restr}
P_{2N_l} T|_{\ell^2(2N_l)} = T_l , \ \ \ l \in \mathbb{N} .
\end{equation}
Let us now estimate the difference 
$$
P_{2N_l} T^2|_{\ell^2(2N_l)} - T_l^2 .
$$
Since
$$
T_{l + 1}^2 = \begin{pmatrix}
B_{l + 1}^2 + R_{l + 1}^2      &  B_{l + 1} R_{l + 1}  \\
   R_{l + 1} B_{l + 1}   &  R_{l + 1}^2
\end{pmatrix} , \ \ \ 
B_{l + 1}^2 = \begin{pmatrix}
T_l^2 + S_l S_l^*      &  T_l S_l + S_l K_l  \\
   S_l^* T_l + K_l S_l^*   &  S_l^* S_l + K_l^2
\end{pmatrix} ,
$$
we get
\begin{eqnarray*}
\left\|P_{2N_l} T_{l + 1}^2 x - T_l^2 x\right\| \le
\left\|P_{2N_l} T_{l + 1}^2 x - P_{2N_l} B_{l + 1}^2 x\right\| +
\left\|P_{2N_l} B_{l + 1}^2 x - T_l^2 x\right\| \\
= \|P_{2N_l} R_{l + 1}^2 x\| + \|S_l S_l^*x\| 
<  \varepsilon_{l + 1}^2 + \delta_{l + 1}^2 = 2 \varepsilon_{l + 1}^2 , \ \ \ 
x \in \ell^2(2N_l), \ \ \|x\| \le 1,
\end{eqnarray*} 
and therefore (see \eqref{2})
\begin{eqnarray*}
\left\|P_{2N_l} T_{l + m}^2 x - T_l^2 x\right\| \le
\sum_{p = 0}^{m - 1} \left\|P_{2N_l} T_{l + p + 1}^2 x - P_{2N_l}T_{l + p}^2 x\right\| \\
\le \sum_{p = 0}^{m - 1} \left\|P_{2N_{l + p}} T_{l + p + 1}^2 x - T_{l + p}^2 x\right\|
< 2 \sum_{p = 0}^{m - 1} \varepsilon_{l + p + 1}^2 \\
< 2 \varepsilon_{l + 1}^2 \sum_{p = 0}^{m - 1} \frac{1}{2^{2p}} 
= 2 \varepsilon_{l + 1}^2\, \frac{4}{3} \left(1 -  \frac{1}{2^{2m}}\right) < 
4 \varepsilon_{l + 1}^2 , \ \ \ m \in \mathbb{N} . 
\end{eqnarray*} 
Hence
\begin{equation}
\label{T2Tl2}
\left\|P_{2N_l} T^2|_{\ell^2(2N_l)} - T_l^2\right\| \le  4 \varepsilon_{l + 1}^2 <
\Delta(T_l, N_l, \alpha_l/2)
\end{equation}
(see \eqref{epsdel}). Finally,
\begin{eqnarray*}
&& P_{N_l} T|_{\ell^2(N_l)} = P_{N_l} T_l|_{\ell^2(N_l)} = B_l \ \mbox{ and } \\
&& \left\|P_{N_l} T^2|_{\ell^2(N_l)} - P_{N_l} T_l^2|_{\ell^2(N_l)}\right\| <
\Delta(T_l, N_l, \alpha_l/2) .
\end{eqnarray*}
Since $\mbox{\rm Spec}_2(T_l, \ell^2(N_l)) \subset 
\{\lambda \in \mathbb{C} : \ |\lambda - r| < \alpha_l/2\}$, 
\eqref{Spec2} follows from the definition of $\Delta(T_l, N_l, \alpha_l/2)$.
\end{proof}

\begin{remark}\label{Ni2Ni}
{\rm The proof of Lemma \ref{mainl} does not change if one adds the requirement
$$
\varepsilon_l < \frac12\, \sqrt{\Delta(T_{l - 1}, 2N_{l - 1}, \alpha_{l - 1})}
$$
to \eqref{epsdel}. Then
$$
\left\|P_{2N_l} T^2|_{\ell^2(2N_l)} - T_l^2\right\| \le  4 \varepsilon_{l + 1}^2 <
\Delta(T_l, 2N_l, \alpha_l)
$$
(see \eqref{T2Tl2}). Since $\mbox{\rm Spec}_2(T_l, \ell^2(2N_l)) =
\mbox{\rm Spec}(T_l)$, it follows from the definition of $\Delta(T_l, 2N_l, \alpha_l)$
and from what we know about $\mbox{\rm Spec}(T_l)$, that
$\mbox{\rm Spec}_2(T, \ell^2(2N_l))$ lies in an $\alpha_l$-neighbourhood of
$[-\varrho_l, \varrho_l]$ and the distance from any point of $[-2, 2]$ to 
$\mbox{\rm Spec}_2(T, \ell^2(2N_l))$ is less than $2^{-l} + 2^{-l - 1} + \alpha_l$.
Hence $\mbox{\rm Spec}_2(T, \ell^2(2N_l))$ converge to $[-2, 2]$ while 
$\mbox{\rm Spec}_2(T, \ell^2(N_l))$ converge to $\{r\}$ as $l \to +\infty$.}
\end{remark}

\section{Proof of Theorem \ref{Main}}
\begin{proof}
Let 
$$
r_j \in F\cap \left(\rho_-^{(j)}, \, \rho_+^{(j)}\right)  , \ \ \ j = 1, \dots, n
$$
and let $T^{(j)}$ and
$N_l^{(j)}$, $l \in \mathbb{N}$ be the same as in Lemma \ref{mainl} but with
$r_j \in \left(\rho_-^{(j)}, \, \rho_+^{(j)}\right)$ in place of $r \in (\rho_-, \rho_+)$.
Let $\mathcal{H}_j = \ell^2$, $\mathcal{H} = \bigoplus_{j = 1}^n \mathcal{H}_j$
and $T = \mbox{\rm diag}\{T^{(1)}, \dots, T^{(n)}\} \in \mathcal{B}(\mathcal{H})$.
It is clear that $T = T^*$ and $\mbox{\rm Spec}(T) = 
\bigcup_{j = 1}^n \left[\rho_-^{(j)}, \, \rho_+^{(j)}\right]$. 

Let $F_l$ be a finite subset of the interior of
$\mathcal{Q}\left(\bigcup_{j = 1}^n \left[\rho_-^{(j)}, \, \rho_+^{(j)}\right]\right)$ 
symmetric with respect to the real line and such that
\begin{equation}
\label{FlF}
d_H\left(F_l, F\right) < 2^{-l - 1} ,
\end{equation}
and let $F_l \cap \{\lambda \in \mathbb{C} : \ \mbox{\rm Im} \lambda \ge 0\} = 
\left\{\mu_1^{(l)}, \dots, \mu_{n_l}^{(l)}\right\}$.
For any $k = 1, \dots, n_l$ there exist $\lambda_{1, k}^{(l)},\, \lambda_{2, k}^{(l)},\, 
\lambda_{3, k}^{(l)} \in \bigcup_{j = 1}^n \left(\rho_-^{(j)}, \, \rho_+^{(j)}\right)$ such that
the convex hull of $\left\{\left(\mu_k^{(l)} - \lambda_{m, k}^{(l)}\right)^2\right\}_{m = 1}^3$
contains $0$, i.e.
\begin{eqnarray}\label{zeros}
& \exists\, t_{1, k}^{(l)},\, t_{2, k}^{(l)},\, t_{3, k}^{(l)} \in [0, 1] : \ \
t_{1, k}^{(l)} + t_{2, k}^{(l)} + t_{3, k}^{(l)} = 1, & \nonumber \\
& \sum_{m = 1}^3 t_{m, k}^{(l)} \left(\mu_k^{(l)} - \lambda_{m, k}^{(l)}\right)^2 = 0 &
\end{eqnarray}
(see \cite{S1}).

Let $\mathcal{L}_0 = \{0\}$, $\widetilde{N}_0 = 1$ and suppose we have constructed
$\mathcal{L}_0 \subset \mathcal{L}_1 \subset \cdots \subset \mathcal{L}_{l - 1} 
\subset \mathcal{H}$ and
$\widetilde{N}_0 < \widetilde{N}_1 < \cdots < \widetilde{N}_{l - 1} \in \mathbb{N}$ such that 
$\mathcal{L}_p \subseteq \bigoplus_{j = 1}^n \ell^2\left( \widetilde{N}_p\right)$, $p = 1, \dots, l - 1$.
Let us construct $\mathcal{L}_l$ and $\widetilde{N}_l$. Let $\widehat{N}_l^{(j)}$ be the
smallest number among $N_l^{(j)} < N_{l + 1}^{(j)} < N_{l + 2}^{(j)} < \cdots$ which is greater than
or equal to $\widetilde{N}_{l - 1}$. Then $\mathcal{L}_{l - 1} \subseteq \mathcal{L}^0_l :=
\bigoplus_{j = 1}^n \ell^2\left( \widehat{N}_l^{(j)}\right)$ and
\begin{equation}\label{rs}
d_H\left(\mbox{\rm Spec}_2(T, \mathcal{L}^0_l), \{r_1, \dots, r_n\}\right) < \alpha_l .
\end{equation}

Let $E(\cdot)$ be the spectral measure of $T$ and let 
$$
W_{m, k}^{(l)} \subset \bigcup_{j = 1}^n \left(\rho_-^{(j)}, \, \rho_+^{(j)}\right)
$$ 
be the $\varepsilon'_l$-neighbourhood of $\lambda_{m, k}^{(l)}$, where
$\varepsilon'_l$ is a small positive number to be specified later. Since the subspaces
$E\left(W_{m, k}^{(l)}\right)\mathcal{H} \subset \mathcal{H}$ are infinite dimensional, we can choose
vectors $u_{m, k}^{(l)} \in E\left(W_{m, k}^{(l)}\right)\mathcal{H}$ such that 
$\left\|u_{m, k}^{(l)}\right\| = 1$ and
\begin{eqnarray*}
& u_{m, k}^{(l)} \perp T^q\left(\mathcal{L}^0_l\right) , \ \ \   u_{m, k}^{(l)} \perp T^q u_{m', k'}^{(l)} , 
\ \ \ q = 0, 1, 2, &\\ 
& m, m' = 1, 2, 3, \ \ k, k' = 1, \dots n_l, \ \ (m, k) \not= (m', k') . & 
\end{eqnarray*}
Let
$$
v_k^{(l)} = \sum_{m = 1}^3 \sqrt{t_{m, k}^{(l)}}\, u_{m, k}^{(l)} ,  \ \ \ \ \
\mathcal{L}'_l = \mathcal{L}^0_l\oplus \mbox{\rm span} \left\{v_k^{(l)}\right\}_{k = 1}^{n_l}
$$
and let $\mathcal{P}_l^0 : \mathcal{H} \to \mathcal{L}^0_l$ and $\mathcal{P}'_l : 
\mathcal{H} \to \mathcal{L}'_l$ be the corresponding orthogonal projections.
Then $\left\|v_k^{(l)}\right\| = 1$, 
\begin{eqnarray*}
& v_k^{(l)} \perp T^q\left(\mathcal{L}^0_l\right) , \ \ \   v_k^{(l)} \perp T^q v_{k'}^{(l)} , 
\ \ \ q = 0, 1, 2, &\\ 
&  k, k' = 1, \dots n_l, \ \ k \not= k' , & 
\end{eqnarray*}
and 
\begin{eqnarray*}
&& \mathcal{P}'_l (\lambda I - T)^2|_{\mathcal{L}^0_l} = 
\mathcal{P}^0_l (\lambda I - T)^2|_{\mathcal{L}^0_l} ,  \\
&& \mathcal{P}'_l (\lambda I - T)^2 v_k^{(l)} =
\left((\lambda I - T)^2 v_k^{(l)}, v_k^{(l)}\right) v_k^{(l)} =: p_k^{(l)}(\lambda) v_k^{(l)} .
\end{eqnarray*}
Hence $\mathcal{P}'_l (\lambda I - T)^2|_{\mathcal{L}'_l}$ is unitarily equivalent to 
$$
\begin{pmatrix}
  \mathcal{P}^0_l (\lambda I - T)^2|_{\mathcal{L}^0_l}    &  0  \\
   0   &  \mbox{\rm diag}\left\{p_1^{(l)}(\lambda), \dots, p_{n_l}^{(l)}(\lambda)\right\}
\end{pmatrix}
$$
and
\begin{equation}
\label{unispec2}
\mbox{\rm Spec}_2(T, \mathcal{L}'_l) = \mbox{\rm Spec}_2(T, \mathcal{L}^0_l)
\bigcup \bigcup_{k = 1}^{n_l} \left\{\lambda \in \mathbb{C} : \ p_k^{(l)}(\lambda) = 0\right\} .
\end{equation}
By construction, the coefficients of the quadratic polynomial 
$$
p_k^{(l)}(\lambda) = \left((\lambda I - T)^2 v_k^{(l)}, v_k^{(l)}\right) = 
\sum_{m = 1}^3 t_{m, k}^{(l)} \left((\lambda I - T)^2 u_k^{(l)}, u_k^{(l)}\right)
$$ 
are real and
differ by less than $C \varepsilon'_l$ from those of
$$
q_k^{(l)}(\lambda) := \sum_{m = 1}^3 t_{m, k}^{(l)} \left(\lambda - \lambda_{m, k}^{(l)}\right)^2 .
$$
(It follows from the spectral theorem that one can take $C = 2\max\{1, \|T\|\}$.) 
Taking $\varepsilon'_l$ sufficiently small we can ensure that the zeros 
of $p_k^{(l)}(\lambda)$ differ from those of $q_k^{(l)}(\lambda)$ by less than 
$2^{-l -1}$. According to \eqref{zeros}, $\mu_k^{(l)}$ and its complex conjugate are the zeros
of $q_k^{(l)}(\lambda)$. Hence it follows from \eqref{FlF}, \eqref{rs} and \eqref{unispec2} that
$$
d_H\left(\mbox{\rm Spec}_2(T, \mathcal{L}'_l), F\right) < 
\max\left\{\alpha_l, 2^{-l}\right\} .
$$

Let $\widetilde{N}_l > \widetilde{N}_{l - 1}$,  $\widetilde{N}_l > \widehat{N}_l^{(j)}$,
$j = 1, \dots, n$, \ $P_{(l)} : \mathcal{H} \to \bigoplus_{j = 1}^n \ell^2\left( \widetilde{N}_l\right)$
be the orthogonal projection,
$$
\mathcal{L}_l = \mathcal{L}^0_l\oplus \mbox{\rm span} \left\{P_{(l)} v_k^{(l)}\right\}_{k = 1}^{n_l}
$$
and let $\mathcal{P}_l : \mathcal{H} \to \mathcal{L}_l$ be the corresponding orthogonal projection.
$\mbox{\rm Spec}_2(T, \mathcal{L}'_l)$ is the set of zeros of the determinant of a matrix
representation of $\mathcal{P}'_l (\lambda I - T)^2|_{\mathcal{L}'_l}$ which is a polynomial
in $\lambda$. If $\widetilde{N}_l$ is large, then $P_{(l)} v_k^{(l)}$ is  
close to $v_k^{(l)}$, and the coefficients of the polynomial corresponding to 
$\mathcal{P}_l (\lambda I - T)^2|_{\mathcal{L}_l}$ are close to their counterparts corresponding to
$\mathcal{P}'_l (\lambda I - T)^2|_{\mathcal{L}'_l}$. Hence taking $\widetilde{N}_l$ sufficiently 
large we get
$$
d_H\left(\mbox{\rm Spec}_2(T, \mathcal{L}_l), F\right) < 
\max\left\{\alpha_l, 2^{-l}\right\} 
$$
(see  \cite[Theorem 4.10c]{Hen}).
\end{proof}

\begin{remark}\label{osc}
{\rm $\mbox{\rm Spec}_2(T, \mathcal{L}_l)$ constructed in the above proof converge to $F$.
The limit behaviour of a sequence of second order relative spectra of $T$ may be considerably
more complicated than that. Let, for example, 
$F_0 \subseteq \mathcal{Q}\left(\bigcup_{j = 1}^n \left[\rho_-^{(j)}, \, \rho_+^{(j)}\right]\right)$ 
be another compact set symmetric with respect to the real line and such that
$$
F_0 \cap F\cap \left(\rho_-^{(j)}, \, \rho_+^{(j)}\right) \not= \emptyset , \ \ \ j = 1, \dots, n.
$$
Acting as in the proof above one can construct a sequence $(\mathcal{L}_{0,l})$ similar
to $(\mathcal{L}_l)$ and such that
$$
d_H\left(\mbox{\rm Spec}_2(T, \mathcal{L}_{0,l}), F_0\right) \to 0 \ \mbox{ as } \ l \to +\infty .
$$
Then it is easy to extract subsequences from $(\mathcal{L}_l)$ and
to $(\mathcal{L}_{0,l})$ and to combine them into a new sequence 
$(\mathcal{M}_l) \in \mathfrak{P}(\mathcal{H})$ in such a way that
$$
d_H\left(\mbox{\rm Spec}_2(T, \mathcal{M}_{2l}), F\right) \to 0 \ \mbox{ and } \
d_H\left(\mbox{\rm Spec}_2(T, \mathcal{M}_{2l + 1}), F_0\right) \to 0 \ \mbox{ as } \ l \to +\infty .
$$
One can of course carry out a similar procedure with more than just two limit sets $F$ and $F_0$.}
\end{remark}

\section{Concluding remarks}

The sequence $(N_l)$ in the proof of Lemma \ref{mainl} 
and $(\mbox{\rm dim}\, \mathcal{L}_l)$ in the proof of Theorem \ref{Main}  
are very rapidly increasing and it is not clear whether the above results have
serious implications for ``real life" computations involving second order relative
spectra. In all numerical examples studied so far (see, e.g., \cite{B1, B2, BB, BL, BS0, BS, LS, Str}), 
second order relative spectra seemed to approximate the whole spectrum quite well. 

{\bf Question:} {\it Can the phenomenon described by Lemma \ref{mainl} and Theorem \ref{Main} 
still happen if one restricts the rate of growth of $\mbox{\rm dim}\, \mathcal{L}_l$?} 

Note that
$$
{\lim_{N \to +\infty}} \!\!\! \empty^* \ \mbox{\rm Spec}_2(T, \ell^2(N)) \cap \mathbb{R} 
= [-2, 2] = \mbox{Spec}(T)
$$
in Remark \ref{Ni2Ni}. Here 
\begin{eqnarray*}
{\lim_{l \to +\infty}} \!\!\! \empty^* \
G_l := \Big\{z \in \mathbb{C}\, | \  \exists l_m \in \mathbb{N}, \
\exists z_{l_m} \in G_{l_m} : \
l_m \to +\infty \ \mbox{ and } \ z_{l_m} \to z  \\
\mbox{ as } \ m\to +\infty\Big\}  , \ \ \ G_l \subset \mathbb{C}, \ l \in \mathbb{N} .
\end{eqnarray*}

It is well known that
$$
\lim_{l \to +\infty}  \!\!\! \empty_* \ \mbox{Spec}(T, \mathcal{L}_l) \supseteq
\mbox{Spec}(T) ,  \ \ \ (\mathcal{L}_l) \in \mathfrak{P}(\mathcal{H}),
$$
where
$$
\lim_{l \to +\infty} \!\!\! \empty_* \
G_l := \left\{z \in \mathbb{C}\, | \ \exists z_l \in G_l : \
\lim_{l \to +\infty} z_l = z \right\}  
$$
(see, e.g., \cite{A} or \cite{S1}). It is reasonable therefore to use $\lim_*$ when
approximating $\mbox{Spec}(T)$ with the help of $\mbox{Spec}(T, \mathcal{L}_l)$.
On the other hand, the non-pollution result \eqref{nonpoll} shows it is more natural
to use $\lim^*$ when
approximating $\mbox{Spec}(T)$ with the help of $\mbox{Spec}_2(T, \mathcal{L}_l)$. \\

Another natural question is whether or not one can drop condition \eqref{nonempty}
in Theorem \ref{Main}.

{\bf Question:} {\it Can  the limit set of a sequence of second order relative spectra be disjoint from
the (essential) spectrum of $T = T^* \in \mathcal{B}(\mathcal{H})$?} \\

\textsc{Acknowledgement.}
I am grateful to Michael Strauss for very helpful comments and suggestions.

\end{document}